\title[Semistable Higgs bundles  are strongly Higgs semistable]{Semistable Higgs bundles of small ranks are strongly Higgs semistable}
\author[G.-T. Lan]{Guitang Lan}
\email{lan@uni-mainz.de}
\address{Institut f\"{u}r  Mathematik, Universit\"{a}t
Mainz, Mainz, 55099, Germany}
\author[M. Sheng]{Mao Sheng}
\email{msheng@ustc.edu.cn}
\address{School of Mathematical Sciences,
University of Science and Technology of China, Hefei, 230026, China}
\author[Y.-H. Yang]{Yanhong Yang}
\email{yangy@uni-mainz.de}
\address{Institut f\"{u}r Mathematik, Universit\"{a}t Mainz, Mainz, 55099, Germany}
\author[K. Zuo]{Kang Zuo}
\email{zuok@uni-mainz.de}
\address{Institut f\"{u}r  Mathematik, Universit\"{a}t
Mainz, Mainz, 55099, Germany}
\thanks{This work is supported by the SFB/TR 45 ``Periods, Moduli Spaces and Arithmetic of Algebraic
Varieties" of the DFG, and partially supported by University of Science and Technology of China.}
\begin{document}

\vfuzz0.5pc
\hfuzz0.5pc 
\theoremstyle{plain}
\newtheorem{thm}{Theorem}[section]
\newtheorem{theorem}[thm]{Theorem}
\newtheorem{lemma}[thm]{Lemma}
\newtheorem{corollary}[thm]{Corollary}
\newtheorem{proposition}[thm]{Proposition}
\newtheorem{addendum}[thm]{Addendum}
\newtheorem{variant}[thm]{Variant}
\theoremstyle{definition}
\newtheorem{lemma and definition}[thm]{Lemma and Definition}
\newtheorem{construction}[thm]{Construction}
\newtheorem{notations}[thm]{Notations}
\newtheorem{question}[thm]{Question}
\newtheorem{problem}[thm]{Problem}
\newtheorem{remark}[thm]{Remark}
\newtheorem{remarks}[thm]{Remarks}
\newtheorem{definition}[thm]{Definition}
\newtheorem{claim}[thm]{Claim}
\newtheorem{assumption}[thm]{Assumption}
\newtheorem{assumptions}[thm]{Assumptions}
\newtheorem{properties}[thm]{Properties}
\newtheorem{example}[thm]{Example}
\newtheorem{conjecture}[thm]{Conjecture}
\newtheorem{proposition and definition}[thm]{Proposition and Definition}
\numberwithin{equation}{thm}
\newcommand{\Spec}{\mathrm{Spec}}
\newcommand{\pP}{{\mathfrak p}}
\newcommand{\sA}{{\mathcal A}}
\newcommand{\sB}{{\mathcal B}}
\newcommand{\sC}{{\mathcal C}}
\newcommand{\sD}{{\mathcal D}}
\newcommand{\sE}{{\mathcal E}}
\newcommand{\sF}{{\mathcal F}}
\newcommand{\sG}{{\mathcal G}}
\newcommand{\sH}{{\mathcal H}}
\newcommand{\sI}{{\mathcal I}}
\newcommand{\sJ}{{\mathcal J}}
\newcommand{\sK}{{\mathcal K}}
\newcommand{\sL}{{\mathcal L}}
\newcommand{\sM}{{\mathcal M}}
\newcommand{\sN}{{\mathcal N}}
\newcommand{\sO}{{\mathcal O}}
\newcommand{\sP}{{\mathcal P}}
\newcommand{\sQ}{{\mathcal Q}}
\newcommand{\sR}{{\mathcal R}}
\newcommand{\sS}{{\mathcal S}}
\newcommand{\sT}{{\mathcal T}}
\newcommand{\sU}{{\mathcal U}}
\newcommand{\sV}{{\mathcal V}}
\newcommand{\sW}{{\mathcal W}}
\newcommand{\sX}{{\mathcal X}}
\newcommand{\sY}{{\mathcal Y}}
\newcommand{\sZ}{{\mathcal Z}}
\newcommand{\A}{{\mathbb A}}
\newcommand{\B}{{\mathbb B}}
\newcommand{\C}{{\mathbb C}}
\newcommand{\D}{{\mathbb D}}
\newcommand{\E}{{\mathbb E}}
\newcommand{\F}{{\mathbb F}}
\newcommand{\G}{{\mathbb G}}
\renewcommand{\H}{{\mathbb H}}
\newcommand{\I}{{\mathbb I}}
\newcommand{\J}{{\mathbb J}}
\renewcommand{\L}{{\mathbb L}}
\newcommand{\M}{{\mathbb M}}
\newcommand{\N}{{\mathbb N}}
\renewcommand{\P}{{\mathbb P}}
\newcommand{\Q}{{\mathbb Q}}
\newcommand{\Qbar}{\overline{\Q}}
\newcommand{\R}{{\mathbb R}}
\newcommand{\SSS}{{\mathbb S}}
\newcommand{\T}{{\mathbb T}}
\newcommand{\U}{{\mathbb U}}
\newcommand{\V}{{\mathbb V}}
\newcommand{\W}{{\mathbb W}}
\newcommand{\Z}{{\mathbb Z}}
\newcommand{\g}{{\gamma}}
\newcommand{\bb}{{\beta}}
\newcommand{\as}{{\alpha}}
\newcommand{\id}{{\rm id}}
\newcommand{\rk}{{\rm rank}}
\newcommand{\END}{{\mathbb E}{\rm nd}}
\newcommand{\End}{{\rm End}}
\newcommand{\Hom}{{\rm Hom}}
\newcommand{\Hg}{{\rm Hg}}
\newcommand{\tr}{{\rm tr}}
\newcommand{\Sl}{{\rm Sl}}
\newcommand{\Gl}{{\rm Gl}}
\newcommand{\Cor}{{\rm Cor}}

\newcommand{\SO}{{\rm SO}}
\newcommand{\OO}{{\rm O}}
\newcommand{\SP}{{\rm SP}}
\newcommand{\Sp}{{\rm Sp}}
\newcommand{\UU}{{\rm U}}
\newcommand{\SU}{{\rm SU}}
\newcommand{\SL}{{\rm SL}}

\newcommand{\ra}{\rightarrow}
\newcommand{\xra}{\xrightarrow}
\newcommand{\la}{\leftarrow}
\newcommand{\Nm}{\mathrm{Nm}}
\newcommand{\Gal}{\mathrm{Gal}}
\newcommand{\Res}{\mathrm{Res}}
\newcommand{\GL}{\mathrm{GL}}

\newcommand{\GSp}{\mathrm{GSp}}
\newcommand{\Tr}{\mathrm{Tr}}

\newcommand{\bA}{\mathbf{A}}
\newcommand{\bK}{\mathbf{K}}
\newcommand{\bM}{\mathbf{M}} 
\newcommand{\bP}{\mathbf{P}}
\newcommand{\bC}{\mathbf{C}}


\begin{abstract} 
We give a proof of the conjecture that  a  semistable Higgs bundle is strongly Higgs semistable in the case of small ranks, based upon the fact that there exists a gr-semistable Griffiths-transverse filtration on a $\nabla$-invariant semistable vector bundle. The latter is a generalization of   \cite[Theorem 2.5]{Simpson}.
\end{abstract}
\maketitle
\section{Introduction}
In  our previous work \cite{LSZ}, the notion of a strongly Higgs semistable  bundle has been introduced and it has been conjectured that a  semistable Higgs bundle is strongly Higgs semistable. In this note, we give a proof of the conjecture when the rank of Higgs bundle is smaller than the characteristic $p$ of the base field. 

The rank-$2$ case has been verified in \cite{LSZ}; the rank-$3$-and -$4$ case has been proved by Lingguang Li \cite{Li}. 
The key point of our proof  is to show the existence of a Griffiths-transverse filtration of a flat bundle such that the associated-graded Higgs bundle is semistable. 
In the case of a complex curve, the existence has been proved by Simpson \cite[Theorem 2.5]{Simpson}. We apply similar techniques and establish the existence of such a filtration in Theorem \ref{goodfil}.

Early this May, Adrian Langer kindly informed us that he could prove that every semistable Higgs bundle is strongly Higgs semistable. Since his manuscript, to which we have been looking forward, has not been available in any form, we decide to write down our own proof.

\section{A gr-semistable filtration in arbitrary characteristic}
In this section, we generalize in Theorem \ref{goodfil} a result of Simpson \cite[Theorem 2.5]{Simpson}, which states that over a smooth complex projective curve, every vector bundle with an integrable holomorphic connection admits a Griffiths-transverse filtration such that the associated-graded Higgs bundle is semistable. The generalization is proved by applying the same technique  as in \cite[Theorem 2.5]{Simpson}.

In this note,   vector (Higgs) bundles  always mean  torsion-free (Higgs) sheaves;    filtrations  only mean those consisting of saturated torsion-free subsheaves; 
 $X$ is a smooth projective variety over an algebraically closed field $k$;  the  semistability of vector (Higgs) bundles  is referred to the $\mu$-semistability with respect to an ample invertible sheaf $\sH$ over $X$. 

Let $(V, \nabla)$ be a flat vector bundle over $X$, i.e. the connection $\nabla: V\to V\otimes_{\sO_X}\Omega^1_X$ is integrable.
We first recall some notations from \cite{Simpson}.  A Griffiths-transverse filtration  is a decreasing filtration of $V$ by strict subbundles
\begin{align*}
V=F^0\supset F^1\supset F^2\cdots\supset F^n\supset F^{n+1}=0
\end{align*}
which satisfies the Griffiths transversality condition
\begin{align*}
\nabla: F^i\to F^{i-1}\otimes_{\sO_X}\Omega^1_X, \forall i.
\end{align*}
Put $E:=\text{Gr}_F(V):=\oplus_{i=0}^n E^i$ with $E^i=F^i/F^{i+1}$. By using $\nabla$, one can define an $\sO_X$-linear map
\begin{align*}
\theta: E^i\to E^{i-1}\otimes_{\sO_X}\Omega^1_X, \forall i.
\end{align*}
 $(E, \theta)$ is called to be the associated-graded Higgs bundle corresponding to $(V, \nabla, F^*)$. We say that  $(V, \nabla, F^*)$ is gr-semistable if $(E, \theta)$  is semistable as a Higgs bundle. Moreover, we need the notion of $\nabla$-semistability.

\begin{definition}
A flat bundle $(V, \nabla)$ is called to be $\nabla$-semistable if for every subbundle $V_1\subset V$ with $\nabla(V_1)\subset V_1\otimes_{\sO_X}\Omega^1_X$, we have $\mu(V_1)\leq \mu(V)$.
\end{definition}
The goal of this section is to prove the following theorem.
\begin{theorem}\label{goodfil}
Let $(V, \nabla)$  be a  $\nabla$-semistable flat bundle  over a smooth projective variety $X$ defined over an algebraically closed field $k$. Then there exists a Griffiths-transverse filtration $F^*$  such that  the associated-graded Higgs bundle corresponding to $(V, \nabla, F^*)$  is semistable, or equivalently,   $(V, \nabla)$ admits a gr-semistable  Griffiths-transverse filtration.
\end{theorem}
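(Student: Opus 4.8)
The plan is to carry out Simpson's iterated destabilizing modification construction \cite{Simpson} in this generality. One starts with the trivial Griffiths-transverse filtration $V=F^0\supset F^1=0$, whose associated-graded Higgs bundle is $(V,0)$. If the filtration at hand is already gr-semistable, stop; otherwise one manufactures a new Griffiths-transverse filtration out of the maximal destabilizing Higgs subsheaf of the associated-graded, and the crux is to do this together with a numerical invariant that guarantees the procedure terminates.

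Suppose $F^*$ is Griffiths-transverse and $(E,\theta):=\text{Gr}_F(V)$ is not semistable; let $G\subset E$ be its maximal destabilizing Higgs subsheaf. The Harder--Narasimhan filtration of a Higgs sheaf is canonical, hence invariant under the $\G_m$-action defining the grading of $E$ (this action rescales $\theta$), so $G$ is graded: $G=\bigoplus_i G^i$ with $G^i=G\cap E^i$ and $\theta(G^i)\subset G^{i-1}\otimes_{\sO_X}\Omega^1_X$. Let $\hat G^i\subset F^i$ be the preimage of $G^i$ under $F^i\twoheadrightarrow E^i$; it is saturated in $V$ because $G$ is saturated in $E$. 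Put $\tilde F^0=V$ and $\tilde F^{i+1}=\hat G^i$ for $i\geq 0$ (and discard repeated steps). Then $\nabla(\tilde F^{i+1})=\nabla(\hat G^i)\subset\hat G^{i-1}\otimes_{\sO_X}\Omega^1_X=\tilde F^i\otimes_{\sO_X}\Omega^1_X$, since $\nabla(F^i)\subset F^{i-1}\otimes_{\sO_X}\Omega^1_X$ and $\theta(G^i)\subset G^{i-1}\otimes_{\sO_X}\Omega^1_X$, so $\tilde F^*$ is again Griffiths-transverse. A direct computation identifies $(E/G,\bar\theta)$ with a Higgs subsheaf of $\text{Gr}_{\tilde F}(V)$ whose quotient is, up to a shift of the grading, $(G,\theta|_G)$: the modification interchanges the destabilizing subsheaf and the corresponding quotient. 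As $G$ is semistable with $\mu(G)=\mu_{\max}(E,\theta)>\mu(E)=\mu(V)$ while $\mu_{\max}(E/G,\bar\theta)<\mu(G)$, it follows that $\mu_{\max}(\text{Gr}_{\tilde F}V)\leq\mu_{\max}(\text{Gr}_F V)$: the modification does not increase the instability.

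To force termination, assign to each Griffiths-transverse filtration the rational number
\[
\delta(F):=\sum_i i\cdot\deg(E^i)-\mu(V)\sum_i i\cdot\rk(E^i)=\sum_{i\geq 1}\rk(F^i)\bigl(\mu(F^i)-\mu(V)\bigr).
\]
The bookkeeping above yields $\delta(\tilde F)=\delta(F)+\rk(G)\bigl(\mu(G)-\mu(V)\bigr)$, and, since all degrees in question are integers, $\rk(G)\bigl(\mu(G)-\mu(V)\bigr)\geq 1/\rk(V)$; hence every modification increases $\delta$ by at least $1/\rk(V)$. On the other hand a filtration by saturated subsheaves of $V$ has at most $\rk(V)$ steps and $\mu(F^i)\leq\mu_{\max}(V)$ for each $i$, so $\delta(F)\leq\rk(V)^2\bigl(\mu_{\max}(V)-\mu(V)\bigr)$. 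These two facts cannot both persist along an infinite iteration, so after finitely many steps one reaches a Griffiths-transverse filtration with semistable associated-graded Higgs bundle.

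The step I expect to be the main obstacle is precisely this termination argument in the present generality. One must check that discarding repeated steps after a modification preserves admissibility and the monotonicity of $\delta$ (or of a suitable refinement of it in degenerate cases), and --- this is where passing from Simpson's smooth complex curve to a projective variety over an arbitrary field genuinely requires new input --- that the Harder--Narasimhan formalism for Higgs sheaves and the requisite boundedness hold, in particular that the family of graded Higgs bundles $\text{Gr}_F(V)$ with fixed numerical invariants and $\mu_{\max}$ bounded by a constant is bounded; in characteristic $p$ this rests on Langer's boundedness theorem in place of the classical one. With these ingredients in hand, the remainder of Simpson's argument transfers essentially verbatim.
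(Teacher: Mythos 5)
Your construction of the modified filtration is the same as the paper's operation $\xi$ (Simpson's iterated destabilizing modification), and your grading argument for the maximal destabilizing subsheaf matches the paper's Lemma~\ref{maxsubbundle}. The genuine gap is in the termination argument, and you can locate it abstractly: your proof never uses the hypothesis that $(V,\nabla)$ is $\nabla$-semistable, yet the theorem is false without it. Indeed, if $W\subset V$ is $\nabla$-invariant with $\mu(W)>\mu(V)$, then for \emph{every} Griffiths-transverse filtration $F^*$ the induced graded subsheaf $\oplus_i\,(W\cap F^i)/(W\cap F^{i+1})\subset \mathrm{Gr}_F(V)$ is $\theta$-invariant of slope $\geq\mu(W)>\mu(\mathrm{Gr}_F(V))$, so no gr-semistable filtration exists. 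Concretely, the trouble is the parenthetical ``discard repeated steps.'' The iteration does produce repeated terms $F^{i}=F^{i+1}\neq 0$ (equivalently, zero graded pieces), and then you face a dilemma: if you keep them, the bound $\delta(F)\leq \rk(V)^2(\mu_{\max}(V)-\mu(V))$ fails, because the number of indices with $F^i\neq 0$ is no longer bounded by $\rk(V)$ and repeated terms of slope $>\mu(V)$ can contribute positively an unbounded number of times; if you discard them, the identity $\delta(\tilde F)=\delta(F)+\rk(G)(\mu(G)-\mu(V))$ breaks down. For instance, with $V=L\oplus\sO_X$, $\nabla$ a direct-sum connection and $\deg L>0$ (possible in characteristic $p$), starting from $V\supset L\supset 0$ one modification gives $V\supset L\supset L\supset 0$, and discarding the repeat returns the original filtration with the original $\delta$: the process cycles forever, as it must, since no gr-semistable filtration exists here.

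The repair is short but it is exactly where $\nabla$-semistability must enter: if $F^i=F^{i+1}$, Griffiths transversality gives $\nabla(F^{i+1})\subset F^{i}\otimes_{\sO_X}\Omega^1_X=F^{i+1}\otimes_{\sO_X}\Omega^1_X$, so a repeated term is $\nabla$-invariant, hence by hypothesis $\mu(F^i)\leq\mu(V)$ and it contributes $\leq 0$ to $\delta$. Keeping all repeats, $\delta$ is then uniformly bounded above (distinct nonzero terms have strictly decreasing ranks, so there are at most $\rk(V)$ of them, and runs of length $\geq 2$ contribute non-positively), and your monotonicity $\delta\mapsto\delta+\rk(G)(\mu(G)-\mu(V))\geq\delta+1/\rk(V)$ closes the argument. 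With this fix your route is correct and genuinely different from the paper's: the paper tracks the lexicographically decreasing pair $(\mu_{\max},r_{\max})$ of the maximal destabilizing subobject (Lemmas~\ref{invariants} and~\ref{terminal}) and derives the contradiction by showing that if these invariants stagnate for $n$ steps the destabilizer shifts through the filtration and becomes a $\nabla$-invariant subsheaf of slope $>\mu(V)$ --- the same point at which $\nabla$-semistability is used. Note also that your worry about Harder--Narasimhan theory and boundedness is largely misplaced: the HN filtration for Higgs sheaves on $X$ exists by the standard argument in any characteristic, and neither your (repaired) argument nor the paper's needs any boundedness of families \`a la Langer, since termination comes from a single monotone quantity, not from finiteness of a moduli family.
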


\begin{remark}
 By \cite{Weil}, over a complex curve,  every holomorphic vector bundle  that  admits a connection  is of degree $0$,  thus in this case every flat bundle $(V,\nabla)$ is  naturally $\nabla$-semistable, and \cite[Theorem 2.5]{Simpson} is a special case of  Theorem \ref{goodfil}  when $k=\C$. When $k$ is of characteristic $p>0$, the degree of  a vector bundle  that  admits a connection  is  $0$ after modulo $p$ but not necessarily  $0$, thus not every flat bundle is $\nabla$-semistable.
\end{remark}

To prove the above theorem, we need some lemmas.

\begin{lemma}\label{maxsubbundle}
Let $(E, \theta)$ be a system of Hodge bundle, i.e. $E=\oplus_{i=0}^n E^i$ with $\theta: E^i\to E^{i-1}\otimes_{\sO_X}\Omega^1_X$. If $(E, \theta)$ is unstable as a  Higgs bundle, then its  maximal semistable Higgs subbundle $I\subset E$  is itself a system of Hodge bundles, that is   $I=\oplus_{i=0}^n I^i$ with $I^i:=I\cap E^{i}$.
\end{lemma}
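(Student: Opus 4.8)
The plan is to exploit two facts: the \emph{uniqueness} of the maximal semistable Higgs subsheaf, and the tautological $\G_m$-action carried by any system of Hodge bundles (this is just the principle that the Harder--Narasimhan filtration of a Higgs sheaf equipped with a $\G_m$-structure is $\G_m$-equivariant). Recall that $I$, being the first term of the Harder--Narasimhan filtration of the Higgs sheaf $(E,\theta)$, is the \emph{unique} maximal semistable Higgs subsheaf: it is semistable with $\mu(I)=\mu_{\max}(E,\theta)$, and every semistable Higgs subsheaf of slope $\mu_{\max}(E,\theta)$ is contained in it. Consequently, if $\phi$ is any $\sO_X$-linear automorphism of the underlying bundle $E$ such that both $\phi$ and $\phi^{-1}$ carry $\theta$-invariant subsheaves to $\theta$-invariant subsheaves, then the Higgs subsheaves of $\phi(I)$ are exactly the $\phi(I')$ for Higgs subsheaves $I'\subseteq I$, and $\mu(\phi(I'))=\mu(I')$ since $\phi$ is an isomorphism of the underlying sheaves; hence $\phi(I)$ is again a semistable Higgs subsheaf of slope $\mu_{\max}(E,\theta)$, so $\phi(I)\subseteq I$, and symmetrically $I=\phi(\phi^{-1}(I))\subseteq\phi(I)$, i.e. $\phi(I)=I$.

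Next I would produce a one-parameter family of such automorphisms from the grading. For $t\in k^{\times}$ let $\lambda_t\in\mathrm{Aut}_{\sO_X}(E)$ be given by $\lambda_t|_{E^i}=t^i\cdot\id$, extended to $E\otimes_{\sO_X}\Omega^1_X$ by acting trivially on $\Omega^1_X$. The condition $\theta(E^i)\subseteq E^{i-1}\otimes_{\sO_X}\Omega^1_X$ gives, on each $E^i$ and hence on $E$, the relation $(\lambda_t\otimes\id)\circ\theta=t^{-1}\,\theta\circ\lambda_t$, equivalently $(\lambda_t^{-1}\otimes\id)\circ\theta\circ\lambda_t=t\,\theta$. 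For a subsheaf $F\subseteq E$, applying $\lambda_t^{-1}\otimes\id$ to $\theta(\lambda_tF)\subseteq(\lambda_tF)\otimes_{\sO_X}\Omega^1_X$ turns this inclusion into $t\,\theta(F)\subseteq F\otimes_{\sO_X}\Omega^1_X$, which (as $t\ne0$) holds if and only if $F$ is $\theta$-invariant. Thus each $\lambda_t$ (and each $\lambda_t^{-1}=\lambda_{t^{-1}}$) permutes the $\theta$-invariant subsheaves of $E$, so by the first paragraph $\lambda_t(I)=I$ for every $t\in k^{\times}$.

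Finally I would descend from this $\G_m$-invariance to the grading. Working locally, write a section $v$ of $I$ as $v=\sum_{i=0}^{n}v_i$ with $v_i$ a section of $E^i$; then $\lambda_t(v)=\sum_{i=0}^{n}t^iv_i$ is a section of $I$ for all $t\in k^{\times}$. As $k$ is algebraically closed, hence infinite, pick $n+1$ distinct nonzero scalars $t_0,\dots,t_n$ and invert the Vandermonde matrix $(t_j^{\,i})_{0\le i,j\le n}$; this exhibits each $v_i$ as an $\sO_X$-linear combination of the sections $\lambda_{t_j}(v)$ of $I$, so each $v_i$ is a section of $I$. Hence $I=\bigoplus_{i=0}^{n}I^i$ with $I^i:=I\cap E^i$. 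Each $I^i$ is the kernel of $E^i\hookrightarrow E\twoheadrightarrow E/I$, hence saturated; and since $\Omega^1_X$ is locally free, $\theta(I^i)\subseteq\theta(I)\cap\theta(E^i)\subseteq\bigl(I\otimes_{\sO_X}\Omega^1_X\bigr)\cap\bigl(E^{i-1}\otimes_{\sO_X}\Omega^1_X\bigr)=\bigl(I\cap E^{i-1}\bigr)\otimes_{\sO_X}\Omega^1_X=I^{i-1}\otimes_{\sO_X}\Omega^1_X$, so $(I,\theta|_I)$ is a system of Hodge bundles with the asserted pieces. The conceptual content is entirely in the uniqueness statement of the first paragraph and the commutation relation of the second; the only mildly delicate point is this last passage from invariance under the $k$-points $\lambda_t$ to the actual splitting of $I$ and the saturatedness of the $I^i$, which is handled by the Vandermonde argument over the infinite field $k$, so I do not anticipate a genuine obstacle.
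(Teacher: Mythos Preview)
Your proof is correct and follows essentially the same approach as the paper: exploit the $\G_m$-action $\lambda_t|_{E^i}=t^i\,\id$, use uniqueness of the maximal destabilizing Higgs subsheaf to get $\lambda_t(I)=I$, and then invert a Vandermonde matrix to split $I$ along the grading. The only cosmetic differences are that the paper fixes a single $t$ with $t^m\neq1$ for $0<m\le n$ and uses the powers $\lambda_t^j=\lambda_{t^j}$, whereas you use $n+1$ distinct nonzero scalars; and you spell out the saturatedness of $I^i$ and the compatibility $\theta(I^i)\subseteq I^{i-1}\otimes\Omega^1_X$, which the paper leaves implicit.
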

\begin{proof}
 Choose $t\in k$ such that $t^i\neq 1 $ for $0<i\leq n$. Note that there is  an isomorphism $f: (E, \theta)\to (E, t\theta)$ giving by $f|_{E^i}=t^i\text{id}$. Because of  the uniqueness of the  maximal destabilizing subobject, we see that $f(I)=I$.
Let $s$ be any local  section of $I$. Write  $s$ as  $\sum_{i=0}^ns^i$, where $s^i$ is a local section of $E^i$. Then
\begin{align*}
\text{for }j\geq0,
f^{j} (s)=\sum_{i=0}^nt^{ji}s^i \in I.
\end{align*}
Consider
 $$
 \begin{pmatrix}
s\\
f(s)\\
\vdots\\
f^n(s)
 \end{pmatrix} =
 \begin{pmatrix}
   1 & 1 &1 &\cdots &1 \\
   1 & t &t^2&\cdots &t^n\\
   \vdots & \vdots& \vdots& \ddots & \vdots \\
   1 & t^n& t^{2n}& \cdots &t^{n^2}
 \end{pmatrix} \cdot
 \begin{pmatrix}
s^0\\
s^1\\
\vdots\\
s^n
 \end{pmatrix}.
$$
 By assumption on $t$, the coefficient matrix is invertible; thus all $s^i$'s are local sections of $I$ and $I =\oplus_{i=0}^n I\cap E^{i}$.
\end{proof}

Let $(V, \nabla)$ be a flat bundle over $X$. Start with an arbitrary Griffiths-transverse filtration $F^*$ of weight $n$ and consider the associated Higgs bundle $(\text{Gr}_F(V), \theta)$. If  $(\text{Gr}_F(V), \theta)$  is unstable,  let $I_F$   be  the  maximal semistable Higgs subbundle, which is also called the maximal destabilizing subobject; otherwise let $I_F=\text{Gr}_F(V)$. By Lemma \ref{maxsubbundle},
\begin{align*}
I_F=\oplus_{i=0}^nI_F^{i}, \quad  I_F^i\subset F^i/F^{i+1} \subset V/F^{i+1}.
\end{align*}
We define an operation $\xi$ on the set of  Griffiths-transverse filtrations. The new filtration $\xi(F)^*$ of $V$ is given by
\begin{align*}
\xi(F)^{i+1}:=\text{Ker}(V\to  \frac{ V/F^{i+1}}{I_F^i} ), \text{ for }   0\leq i\leq n; \quad \xi(F)^0=V.
\end{align*}
Note that $F^i\supset \xi(F)^{i+1}\supset F^{i+1}$ and  $\xi(F)^*$ is again Griffiths-transverse.  And there is  an exact sequence
\begin{align}
0\to \text{Gr}^i_F(V)/I_F^i\to \text{Gr}^i_{\xi(F)}(V) \to I_F^{i-1} \to 0, \quad \text{for }  0\leq i\leq n+1.
\end{align}
By adding all together, we obtain an exact sequence of systems of Hodge bundles
\begin{align}\label{exactseq}
0\to \text{Gr}_F(V)/I_F\to \text{Gr}_{\xi(F)}(V) \to I_F^{[1]} \to 0,
\end{align}
where $E^{[k]}$ denotes the system of Hodge bundle $E$ with Hodge index shifted so that $(E^{[k]})^i=E^{i-k}$. If $(E, \theta)$ is unstable, let $\mu_\text{max}(E)$ and $r_\text{max}(E)$ denote respectively the slope and rank of the maximal destabilizing subobject of $E$; otherwise, let $\mu_\text{max}(E)=\mu(E)$ and $r_\text{max}(E)=\text{rk}(E)$.  By (\ref{exactseq}), we have

\begin{lemma}\label{invariants}
\begin{enumerate}
\item $\mu_\text{max}(\text{Gr}_{\xi(F)}(V)) \leq  \mu_\text{max}(\text{Gr}_F(V)) $.

\item  If $\mu_\text{max}(\text{Gr}_{\xi(F)}(V)) =  \mu_\text{max}(\text{Gr}_F(V)) $, then $r_\text{max}(\text{Gr}_{\xi(F)}(V)) \leq r_\text{max}(\text{Gr}_F(V)) $.

\item  Moreover, if $r_\text{max}(\text{Gr}_{\xi(F)}(V)) = r_\text{max}(\text{Gr}_F(V)) $, then (\ref{exactseq}) is split as Higgs bundles and $ I_F^{[1]}$ is  the maximal destabilizing subobject of $\text{Gr}_{\xi(F)}(V)$, denoted by $ I_{\xi(F)}= I_F^{[1]}$.
\end{enumerate}
\end{lemma}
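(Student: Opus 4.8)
I would set up notation first: write $G:=\text{Gr}_F(V)$, $G':=\text{Gr}_{\xi(F)}(V)$, $I:=I_F$ and $Q:=G/I$, so that (\ref{exactseq}) is the short exact sequence
\[
0\longrightarrow Q\longrightarrow G'\longrightarrow I^{[1]}\longrightarrow 0
\]
of systems of Hodge bundles. If $G$ is already semistable there is nothing to do: then $I=G$, $Q=0$, $G'\cong G^{[1]}$, and all three assertions hold because $I^{[1]}$ is literally the same Higgs sheaf as $I$ with its Hodge grading relabeled, so (semi)stability and slope are unchanged. Hence I would assume $G$ unstable from now on. Then $I$ is semistable of slope $\mu:=\mu_\text{max}(G)$, the shift $I^{[1]}$ is again semistable of slope $\mu$, and, by the Harder--Narasimhan formalism for systems of Hodge bundles over $X$ (available in arbitrary characteristic), the quotient $Q=G/I$ satisfies the \emph{strict} inequality $\mu_\text{max}(Q)<\mu$.

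Next I would bring in the maximal destabilizing subobject $S\subseteq G'$ (or $S=G'$ if $G'$ is semistable): it is a \emph{saturated} Higgs subsheaf, semistable, with $\mu(S)=\mu_\text{max}(G')$ and $\text{rk}(S)=r_\text{max}(G')$. Put $K:=S\cap Q=\ker(S\to I^{[1]})$ and let $\bar S\subseteq I^{[1]}$ be the image of $S$, so $0\to K\to S\to \bar S\to 0$ is exact. Being the kernel of a map into a torsion-free sheaf, $K$ is saturated in $S$, hence in $Q$, so $\mu(K)\le \mu_\text{max}(Q)<\mu$; and $\bar S\subseteq I^{[1]}$ with $I^{[1]}$ semistable of slope $\mu$ gives $\mu(\bar S)\le\mu$. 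Since rank and degree are additive along $0\to K\to S\to\bar S\to 0$, the slope $\mu(S)$ is the rank-weighted average of $\mu(K)$ and $\mu(\bar S)$ (when both are nonzero; the degenerate cases are immediate), whence $\mu(S)\le\mu$, which is (1). For (2), assume equality $\mu(S)=\mu$. Since $\mu(K)<\mu$ strictly while $\mu(\bar S)\le\mu$, the only way this average can equal $\mu$ is for the $K$-term to carry zero weight, i.e. $\text{rk}(K)=0$, i.e. $K=0$. Then $S\hookrightarrow I^{[1]}$, so $r_\text{max}(G')=\text{rk}(S)\le\text{rk}(I^{[1]})=\text{rk}(I)=r_\text{max}(G)$.

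For (3), assume in addition $\text{rk}(S)=\text{rk}(I^{[1]})$. Then the injection $S\hookrightarrow I^{[1]}$ from (2) has equal rank on both sides, and $\mu(S)=\mu=\mu(I^{[1]})$ forces $\deg(S)=\deg(I^{[1]})$, so its cokernel $I^{[1]}/\bar S$ is a torsion sheaf of degree zero, i.e. supported in codimension $\ge 2$. The plan is to conclude from this that $S\xrightarrow{\sim}I^{[1]}$. Here I would exploit the saturatedness of $S$ in $G'$: the preimage of $\bar S$ under $G'\to I^{[1]}$ is $S\oplus Q$ (because $S\cap Q=0$), its saturation in $G'$ is all of $G'$ (a full-rank saturated subsheaf of a torsion-free sheaf is the whole sheaf), and $G'/(S\oplus Q)\cong I^{[1]}/\bar S$ is torsion supported in codimension $\ge 2$; one should then be able to deduce from the fact that $S$ itself is saturated in $G'$ that this torsion quotient vanishes, so that $S\oplus Q=G'$. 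In particular $S\hookrightarrow G'$ is a section of $G'\to I^{[1]}$, so (\ref{exactseq}) splits as Higgs bundles, $G'\cong Q\oplus I^{[1]}$, and under the splitting $I_F^{[1]}$ is carried onto $S$, which is by construction the maximal destabilizing subobject of $G'$; thus $I_{\xi(F)}=I_F^{[1]}$.

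The steps I regard as routine are (1) and (2): they amount to the convex-combination bookkeeping along the short exact sequence together with the strict inequality $\mu_\text{max}(Q)<\mu_\text{max}(G)$. The step I expect to be the main obstacle is the final one in (3): promoting the full-rank injection $S\hookrightarrow I_F^{[1]}$ — which is already an isomorphism away from a closed subset of codimension $\ge 2$ — to an honest isomorphism, equivalently showing the degree-zero torsion sheaf $I_F^{[1]}/\bar S$ is actually zero. On a smooth projective curve, which is Simpson's setting in \cite[Theorem 2.5]{Simpson}, this is vacuous since there are no codimension-$\ge 2$ loci; over higher-dimensional $X$ one has to combine the saturatedness of the maximal destabilizing subobject inside $\text{Gr}_{\xi(F)}(V)$ with the standard fact that $\mu$-semistability and the maximal destabilizing subobject of a torsion-free Higgs sheaf are unaffected by modifications in codimension $\ge 2$. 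I would also make sure to record, when writing this up, that the Harder--Narasimhan formalism for systems of Hodge bundles used freely above is valid over $X$ in arbitrary characteristic.
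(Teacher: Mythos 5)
The paper gives no written proof of Lemma \ref{invariants} (it is asserted to follow directly from (\ref{exactseq})), so the only question is whether your argument is complete. For (1) and (2) it is: the rank-weighted slope bookkeeping along $0\to Q\to G'\to I_F^{[1]}\to 0$, combined with the strict inequality $\mu_{\max}(Q)<\mu$ for the quotient by the maximal destabilizing subobject and the semistability of the shift $I_F^{[1]}$, is exactly the intended argument, and your handling of the degenerate cases is fine.

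Part (3), however, is not closed, and the step you yourself flag as the main obstacle is a genuine gap, not a routine verification; moreover the mechanism you propose for it does not work as stated. You correctly reduce to: $S\subset G'$ saturated (being the maximal destabilizing subobject), $S\cap Q=0$, and $S\to I_F^{[1]}$ injective of full rank with equal degree, so the cokernel $T$ is torsion supported in codimension $\ge 2$; you then want to deduce $T=0$ from the saturatedness of $S$. But saturatedness of $S$ only says that $M:=G'/S$ is torsion-free, and from $S\cap Q=0$ one gets $Q\hookrightarrow M$ with $M/Q\cong T$; a torsion-free sheaf can perfectly well contain $Q$ properly with quotient supported in codimension $\ge 2$. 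This is excluded precisely when $Q$ is reflexive (then $Q\subseteq M\subseteq M^{**}=Q^{**}=Q$ forces $T=0$), but $Q=\mathrm{Gr}_F(V)/I_F$ is merely torsion-free — it is a quotient by a saturated subsheaf — and not reflexive in general, and $X$ here has arbitrary dimension. On a curve the issue is vacuous, which is why nothing of this sort is needed in Simpson's setting; over higher-dimensional $X$ you need an extra input: for instance pass to reflexive hulls and prove only that $I_{\xi(F)}$ agrees with $I_F^{[1]}$ (and the sequence splits) up to a modification in codimension $\ge 2$, and then check that this weaker conclusion still suffices for the way (3) is used in the proof of Lemma \ref{terminal}, or else supply a genuinely different argument for the honest splitting. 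As written, the splitting of (\ref{exactseq}) and the identification $I_{\xi(F)}=I_F^{[1]}$ are not established; since the paper itself offers no proof of this point, you cannot appeal to it either, and this is the one place where your write-up needs real additional work.
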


We will see that either $\mu_\text{max}$ or $r_\text{max}$ drops after applying $\xi$ finitely many times in the case of a  flat  $\nabla$-semistable vector bundle.
\begin{lemma}\label{terminal}
Let $F^*$ be a weight-$n$ Griffiths-transverse filtration of a flat  $\nabla$-semistable vector bundle $(V,\nabla)$ over $X$. Assume that $(\text{Gr}_F(V), \theta)$ is an unstable Higgs bundle.   Then at least one of the following two inequalities holds:
\begin{align*}
(1) \mu_\text{max}(\text{Gr}_{\xi^n(F)}(V))<  \mu_\text{max}(\text{Gr}_F(V)); \quad (2)  r_\text{max}(\text{Gr}_{\xi^n(F)}(V)) < r_\text{max}(\text{Gr}_F(V)).
\end{align*}
\end{lemma}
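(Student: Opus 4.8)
The plan is to argue by contradiction, following the strategy of \cite[Theorem 2.5]{Simpson}. Suppose neither $(1)$ nor $(2)$ holds, so that
\begin{align*}
\mu_\text{max}(\text{Gr}_{\xi^n(F)}(V)) &= \mu_\text{max}(\text{Gr}_F(V)),\\
r_\text{max}(\text{Gr}_{\xi^n(F)}(V)) &= r_\text{max}(\text{Gr}_F(V)).
\end{align*}
By Lemma \ref{invariants}(1) the sequence $j\mapsto \mu_\text{max}(\text{Gr}_{\xi^j(F)}(V))$ for $0\le j\le n$ is non-increasing, so equality of its two endpoints forces it to be constant; then Lemma \ref{invariants}(2) shows $j\mapsto r_\text{max}(\text{Gr}_{\xi^j(F)}(V))$ is non-increasing as well, and again equality of endpoints forces constancy. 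Moreover, if some $\text{Gr}_{\xi^j(F)}(V)$ with $j\le n$ were semistable, then $\mu_\text{max}$ would already have dropped to $\mu(V)$, which is strictly below $\mu_\text{max}(\text{Gr}_F(V))$ since $\text{Gr}_F(V)$ is unstable while having the same rank and degree as $V$; hence every $I_{\xi^j(F)}$, $j\le n$, is a genuine (proper) maximal destabilizing Higgs subbundle, and Lemma \ref{invariants}(3) applies at each stage: the sequence (\ref{exactseq}) is split and $I_{\xi^{j+1}(F)} = I_{\xi^j(F)}^{[1]}$. Iterating gives $I_{\xi^n(F)} = I_F^{[n]}$; and since by the splitting the quotient $\text{Gr}_{\xi^j(F)}(V)/I_{\xi^j(F)}$ is unchanged at each step (hence equal to $\text{Gr}_F(V)/I_F$), one obtains an isomorphism of Higgs bundles
\[
\text{Gr}_{\xi^n(F)}(V)\ \cong\ \big(\text{Gr}_F(V)/I_F\big)\oplus I_F^{[n]}.
\]

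Next I would use the Hodge weights. Because $F$ has weight $n$, the summand $\text{Gr}_F(V)/I_F$ is concentrated in Hodge degrees $\{0,\dots,n\}$, whereas $I_F^{[n]}$ is concentrated in Hodge degrees $\{n,\dots,2n\}$; the two summands can meet only in degree $n$. In particular $\text{Gr}^i_{\xi^n(F)}(V) = (I_F^{[n]})^i$ for every $i>n$. Let $W\subset V$ be the preimage of the saturated subsheaf $(I_F^{[n]})^n\subset \text{Gr}^n_{\xi^n(F)}(V)$ under the projection $\xi^n(F)^{n}\twoheadrightarrow \text{Gr}^n_{\xi^n(F)}(V)$, so that $\xi^n(F)^{n+1}\subset W\subset \xi^n(F)^{n}$; then $\xi^n(F)^*$ restricts on $W$ to a weight-$n$ filtration whose associated graded is $I_F^{[n]}$ (with Hodge indices shifted back down by $n$). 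Consequently $\rk W = \rk I_F = r_\text{max}(\text{Gr}_F(V))$ and $\deg W = \deg I_F$, whence
\[
\mu(W) = \mu_\text{max}(\text{Gr}_F(V)) > \mu(\text{Gr}_F(V)) = \mu(V),
\]
the strict inequality holding because $\text{Gr}_F(V)$ is unstable with maximal destabilizing Higgs subbundle $I_F$.

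It then remains to verify that $W$ is $\nabla$-invariant, i.e. $\nabla(W)\subset W\otimes_{\sO_X}\Omega^1_X$; granting this, $W$ violates the $\nabla$-semistability of $(V,\nabla)$, a contradiction, and the lemma is proved. The verification is a Griffiths-transversality computation: from $W\subset \xi^n(F)^{n}$ and the Griffiths transversality of $\xi^n(F)^*$ one has $\nabla(W)\subset \xi^n(F)^{n-1}\otimes_{\sO_X}\Omega^1_X$, and the obstruction to $\nabla(W)$ lying inside $W\otimes_{\sO_X}\Omega^1_X$ is expressed, degree by degree along the filtration, through components of the Higgs field $\theta$ of the direct summand $I_F^{[n]}$. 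These components vanish: $I_F^{[n]}$ is a sub-Higgs-bundle concentrated in Hodge degrees $\ge n$, so $\theta$ kills its bottom piece $(I_F^{[n]})^n$, and the direct-sum structure of the displayed isomorphism prevents $\theta$ of its remaining pieces from acquiring any component along $\text{Gr}_F(V)/I_F$; a short descending induction along $\xi^n(F)^*$ then yields $\nabla(W)\subset W\otimes_{\sO_X}\Omega^1_X$.

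The main obstacle I anticipate is precisely this last step, the $\nabla$-invariance of $W$, and in particular the book-keeping at the single overlapping Hodge degree $n$: this is the point at which it is essential that $\xi$ has been iterated exactly $n = \mathrm{weight}(F)$ times, so that $I_F^{[n]}$ has been transported past (all but the boundary of) the Hodge range occupied by $\text{Gr}_F(V)$. By contrast, the reduction of the lemma to this transversality assertion — carried out above using Lemma \ref{invariants} to pin down the splitting and the Hodge degrees — is routine.
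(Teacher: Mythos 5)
Your reduction to the splitting $\text{Gr}_{\xi^n(F)}(V)\cong(\text{Gr}_F(V)/I_F)\oplus I_F^{[n]}$ and the slope computation $\mu(W)=\mu_\text{max}(\text{Gr}_F(V))>\mu(V)$ are fine and run parallel to the paper, but the step you yourself flag as the anticipated obstacle — the $\nabla$-invariance of $W$ at the single overlapping Hodge degree $n$ — is genuinely not proved. Write $G^*=\xi^n(F)^*$, $I=I_F^{[n]}=I_{\xi^n(F)}$, and let $W\subset G^n$ be the preimage of $I^n\subset\text{Gr}^n=G^n/G^{n+1}$. The obstruction to $\nabla(W)\subset W\otimes_{\sO_X}\Omega^1_X$ has three pieces: (i) $\theta$ on $\text{Gr}^{n+1}=I^{n+1}$ followed by projection to $(\text{Gr}^n/I^n)\otimes\Omega^1_X$; (ii) $\theta$ on $I^n$ into $\text{Gr}^{n-1}\otimes\Omega^1_X$; and (iii) the $\sO_X$-linear map $I^n\to(\text{Gr}^n/I^n)\otimes\Omega^1_X$ obtained by lifting a section of $I^n$ to $W$, applying $\nabla$ (which lands in $G^n\otimes\Omega^1_X$ once (ii) vanishes) and reducing modulo $W\otimes\Omega^1_X$. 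Pieces (i) and (ii) vanish for the reasons you give ($I$ is a graded $\theta$-invariant subobject with $I^{n-1}=0$). But (iii) is a degree-preserving component of $\nabla$ — a second fundamental form of $W$ inside $G^n$ — and it is not a component of the Higgs field at all; the direct-sum decomposition of the associated-graded Higgs bundle is a statement about $\theta$ and gives no control over it. So the sentence ``the direct-sum structure \dots prevents $\theta$ of its remaining pieces from acquiring any component along $\text{Gr}_F(V)/I_F$'' does not address the actual obstruction, and the contradiction is not reached.

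The paper's proof avoids the overlap instead of confronting it: it chooses a stage $n_0$ at which the sequence (\ref{exactseq}) has the shape (\ref{seq4}), i.e.\ the destabilizing quotient $I_F^{[n_0]}$ occupies Hodge degrees strictly above those of the complementary part. Then the candidate subsheaf is an honest filtration step $\xi^{n_0}(F)^{n'+1}$, Griffiths transversality gives $\nabla$ of it into $\xi^{n_0}(F)^{n'}\otimes\Omega^1_X$, and the only obstruction is the single genuine $\theta$-component $\text{Gr}^{n'+1}\to\text{Gr}^{n'}\otimes\Omega^1_X$, which the Higgs splitting kills; no degree-preserving term ever appears. To repair your argument you should do the same: either verify that the degree supports have already separated at some stage $\leq n$ (note this is delicate exactly when $I_F^0\neq0$ and $I_F^n\neq E^n$, which is presumably what pushed you into the overlap analysis — the paper's ``not hard to check'' is also tight at this boundary case), or assume for contradiction constancy of $(\mu_\text{max},r_\text{max})$ through stage $n+1$, where the supports are certainly disjoint and your $W$ becomes the filtration step $\xi^{n+1}(F)^{n+1}$; this proves the lemma with $\xi^n$ replaced by $\xi^{n+1}$, which is all that the proof of Theorem \ref{goodfil} requires.
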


\begin{proof}
 Let $\mu_k=\mu_\text{max}(\text{Gr}_{\xi^k(F)}(V))$ and $r_k=r_\text{max}(\text{Gr}_{\xi^k(F)}(V))$ for $k\geq 0$. By Lemma \ref{invariants}, $\{(\mu_k, r_k) | k\in \N\}$ decreases in the lexicographic ordering.  Argue by contradiction. Suppose on the contrary that $\mu_n=\mu_0$ and $r_n=r_0$. Then for $0\leq k<n$,  $\mu_{k+1}=\mu_{k}$ and $r_{k+1}=r_k$. By Lemma \ref{invariants},  $I_{\xi^{k+1}(F)}=I_{\xi^{k}(F)}^{[1]}=I_F^{[k+1]}$. It is not hard to check that for some $0< n_0\leq n$,
 the exact sequence (\ref{exactseq}) for  $\text{Gr}_{\xi^{n_0}(F)}(V))$
\begin{align} \label{seq3}
0\to \text{Gr}_{\xi^{n_0-1}(F)}(V)/I_F^{[n_0-1]}\to \text{Gr}_{\xi^{n_0}(F)}(V) \to I_F^{[n_0]} \to 0,
\end{align}
 as graded vector bundles is of the form
\begin{align} \label{seq4}
0\to \oplus_{i=0}^{n'}E^i \to \oplus_{i=0}^{n'+k}E^i \to \oplus_{i=n'+1}^{n'+k} E^{i} \to 0.
\end{align}
Since $\mu_{n_0}=\mu_{n_0-1}$ and $r_{n_0}=r_{n_0-1}$, (\ref{seq3})
is split as   Higgs bundles; as   (\ref{seq3}) is of the form (\ref{seq4}),  $  I_F^{[n_0]}$ corresponds to a  subbundle  of $V$  preserved by  $\nabla$ with slope $\mu_{n_0} >\mu(V)$, contradiction with the $\nabla$-semistability of $(V,\nabla)$.
\end{proof}

\begin{proof}[Proof of Theorem \ref{goodfil}] Choose an arbitrary  Griffiths-transverse filtration $F^*$ of $(V, \nabla)$.
For $ k\in \N $, let  $\mu_k=\mu_\text{max}(\text{Gr}_{\xi^k(F)}(V))$ and $r_k=r_\text{max}(\text{Gr}_{\xi^k(F)}(V))$.
 By Lemma \ref{invariants}, $\{(\mu_k, r_k)| k\in \N \}$ decreases  in the lexicographic ordering; By Lemma \ref{terminal}, for every $(\mu_k, r_k)$,      if $\mu_k>\mu(V)$, there exists some $N>0$ such that $(\mu_{k+N}, r_{k+N})< (\mu_k, r_k)$. As both $\mu_k$ and  $ r_k$  can take on only finitely many values, there exists some $k_0>0$ such that the subsequence $\{(\mu_k, r_k)| k\geq k_0 \}$ is constant,  which implies that $\mu_{k_0}=\mu(V)$. In other words, $( \text{Gr}_{ \xi^{k_0}(F)}(V), \theta)$ is a semistable Higgs bundle. The Griffiths-transverse filtration $\xi^{k_0}(F)^*$ of $(V, \nabla)$ is as desired.
\end{proof}

 \section{Semistable Higgs bundles are strongly Higgs  semistable}
We give a proof of the conjecture that semistable Higgs bundles are strongly Higgs  semistable in small ranks.
 In this section, $k=\overline{\F}_p$ for some odd prime $p$.

 We first recall some notations from  \cite{LSZ}.With a smooth $W_2$-lifting of $X$, one can define   an  inverse Cartier functor  $C_0^{-1}$  from the category of (semistable) Higgs bundles over $X$ with nilpotent Higgs field of exponent  $<p$ to the category of ($\nabla$-semistable) flat bundles.
A Higgs-de Rham flow is a sequence of the following form:
$$
\xymatrix{
                &  (H_0,\nabla_0)\ar[dr]^{Gr_{F_0}}       &&  (H_1,\nabla_1)\ar[dr]^{Gr_{F_1}}    \\
 (E_0,\theta_0) \ar[ur]^{C_0^{-1}}  & &     (E_1,\theta_1) \ar[ur]^{C_0^{-1}}&&\ldots.       }
$$
If all  $(H_i, \nabla_i)$'s and $(E_i, \theta_i)$'s in a flow are defined over a finte field $k_0\supset \F_p$, then we say that  the flow is defined over   $k_0 $.
 A Higgs bundle is called to be strongly Higgs semistable, if it  appears as the leading term of a Higgs-de Rham flow defined over a finite field.
\begin{lemma}\label{unique fil} Let $(V,\nabla)$ be  a flat vector bundle  of degree zero. If there exists a   Griffiths-transverse filtration $F^*$ such that  the associated-graded Higgs bundle $(E,\theta)$ is stable, then  such a filtration is unique up to index shifting. 
\end{lemma}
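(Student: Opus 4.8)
The approach I would take is to encode a Griffiths-transverse filtered flat bundle as a $\mathbb{G}_m$-equivariant $\lambda$-connection on $X\times\mathbb{A}^1$, via the Rees-module construction of Deligne--Simpson, and then to observe that two such degenerations with \emph{stable} special fibre which agree over $X\times\mathbb{G}_m$ must coincide up to rescaling by the affine coordinate $t$ --- and rescaling a Rees module by $t^c$ corresponds precisely to shifting the underlying Griffiths-transverse filtration by $c$.

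Concretely, let $F^*$ and $G^*$ be two Griffiths-transverse filtrations of $(V,\nabla)$ with $(\text{Gr}_F(V),\theta_F)$ and $(\text{Gr}_G(V),\theta_G)$ stable; since passing to the associated graded preserves rank and degree, both have slope $\mu(V)=0$. Put $R_F:=\sum_{j\ge 0}F^j\,t^{-j}\cdot\sO_X[t]\subset V\otimes_k k[t,t^{-1}]$ and likewise $R_G$. Each is a coherent $\sO_{X\times\mathbb{A}^1}$-submodule, $t$-torsion-free hence flat over $k[t]$, and $\mathbb{G}_m$-equivariant for the grading by the power of $t$; moreover $R_F\supseteq V\otimes_k k[t]$ and $R_F[t^{-1}]=R_G[t^{-1}]=V\otimes_k k[t,t^{-1}]$. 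The Griffiths transversality $\nabla(F^j)\subset F^{j-1}\otimes\Omega^1_X$ shows that $D:=t\nabla$, which satisfies $D(fs)=t\,df\otimes s+fD(s)$ for local functions $f$ on $X$, preserves $R_F$ and $R_G$; its reduction modulo $t$ is $\sO_X$-linear, and one checks the canonical identifications $(R_F/tR_F,\,D\bmod t)\cong(\text{Gr}_F(V),\theta_F)$ and $(R_G/tR_G,\,D\bmod t)\cong(\text{Gr}_G(V),\theta_G)$.

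Now I would compare the two lattices $R_F,R_G\subset V\otimes_k k[t,t^{-1}]$. Since they have the same localization and $R_F$ contains $V\otimes_k k[t]$, there is a minimal integer $c$ with $t^cR_G\subseteq R_F$, and then $t^cR_G\not\subseteq tR_F$; replacing $G^*$ by its index shift by $c$, we may assume $R_G\subseteq R_F$ and $R_G\not\subseteq tR_F$. The inclusion $\iota\colon R_G\hookrightarrow R_F$ commutes with $D$, so reducing modulo $t$ gives a morphism of Higgs bundles $\bar\iota\colon(\text{Gr}_G(V),\theta_G)\to(\text{Gr}_F(V),\theta_F)$, which is nonzero precisely because $R_G\not\subseteq tR_F$. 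Both source and target are stable of the same slope, so the standard fact that a nonzero morphism of stable Higgs sheaves of equal slope is an isomorphism applies: $\bar\iota$ is an isomorphism. Then $\mathrm{coker}(\iota)$ vanishes over $X\times\mathbb{G}_m$ and, by Nakayama, also in a neighbourhood of $X\times\{0\}$; as $X\times\mathbb{G}_m$ and $X\times\{0\}$ cover $X\times\mathbb{A}^1$ and $\iota$ is injective, $\iota$ is an isomorphism, i.e. $R_G=R_F$ as submodules of $V\otimes_k k[t,t^{-1}]$. Reading off $F^j$ as the weight-$(-j)$ piece of its Rees module, this says that $G^*$ (before the shift) and $F^*$ agree up to index shifting.

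The step needing the most care is the bookkeeping around the Rees construction: verifying that $R_F$ is coherent and $k[t]$-flat, that $t\nabla$ preserves it, that its reduction modulo $t$ really is the associated-graded Higgs bundle with its Higgs field, and that an equality of Rees modules amounts to an equality of filtrations up to an index shift. By contrast, the Higgs-theoretic input is a single standard fact, and the hypothesis $\deg V=0$ enters only to make the two slopes coincide; what is genuinely used is $\mu(\text{Gr}_F(V))=\mu(\text{Gr}_G(V))$, which holds in any case.
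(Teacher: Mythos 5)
Your Rees-module argument is a genuinely different route from the paper's proof (which, after choosing suitable index shifts, directly produces a nonzero morphism between the two associated-graded Higgs bundles and plays stability of the source against semistability of the target), and most of your bookkeeping is fine: coherence and $k[t]$-flatness of $R_F$, preservation by $t\nabla$, the identification of the special fibre with $(\mathrm{Gr}_F(V),\theta_F)$, and ``index shift $=$ rescaling by $t^c$'' all check out, as does the normalization $R_G\subseteq R_F$, $R_G\not\subseteq tR_F$. The gap is the step where you quote ``a nonzero morphism of stable Higgs sheaves of equal slope is an isomorphism.'' In this paper $X$ is a smooth projective variety of arbitrary dimension and (Higgs) bundles are torsion-free sheaves, so the graded pieces $F^j/F^{j+1}$ are in general only torsion-free; in that generality the quoted fact is false --- the inclusion $\sI_Z\hookrightarrow\sO_X$ of the ideal sheaf of a codimension-two subscheme is a nonzero map of $\mu$-stable sheaves of equal rank and slope which is not an isomorphism. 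What stability actually gives for $\bar\iota$ is injectivity plus a cokernel of rank zero and degree zero, i.e.\ supported in codimension $\geq 2$, whereas your concluding Nakayama argument needs exactly the surjectivity of $\bar\iota$ modulo $t$; that is the unjustified point (on a curve, as in Simpson's setting, there is no issue).

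The gap is repairable inside your own framework, using the graded structure and the saturation convention of the paper. Since $R_G\subseteq R_F$ means degreewise $G^j\subseteq F^j$ for all $j$, the map $\bar\iota$ is graded: it is the direct sum of the induced maps $G^j/G^{j+1}\to F^j/F^{j+1}$. Injectivity of $\bar\iota$ does follow from stability: the kernel is a Higgs subsheaf of $\mathrm{Gr}_G(V)$, so if it were nonzero and of intermediate rank it would have negative slope, forcing the image to be a Higgs subsheaf of $\mathrm{Gr}_F(V)$ of positive degree and smaller rank, contradicting stability of $\mathrm{Gr}_F(V)$ of slope $\mu(V)$ (and a full-rank kernel would make the image a torsion subsheaf of a torsion-free sheaf, hence zero). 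Injectivity on each graded piece gives $\rk(G^j/G^{j+1})\leq\rk(F^j/F^{j+1})$ for all $j$; since the total ranks agree, these are equalities, so $\rk\,G^j=\rk\,F^j$ for every $j$. Now use that the filtration steps are saturated in $V$: $F^j/G^j$ is a torsion subsheaf of the torsion-free sheaf $V/G^j$, hence zero, so $G^j=F^j$ and the filtrations agree after your shift (this also shows a posteriori that $\bar\iota$ is an isomorphism, but the reason is the saturation, not the quoted fact). Incidentally, this repaired argument only needs stability of one of the two graded objects and semistability of the other, which is the form in which the paper actually uses the lemma.
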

\begin{proof}
Suppose that there is another  gr-semistable Hodge filtration  $\bar{F}^*$   different from $F^*$ up to index shifting.\\  
  
Case 1: Suppose that there exist an integer $N$, such that for every $i$, $F^{i+N}\subset \bar{F}^i$. Then the inclusion induces a natural  map  $f:(E,\theta)\to (\bar{E},\bar{\theta})$. As the two filtrations are different, $f$ is neither  injective  nor a zero map. On one side, $Im(f)$ is a quotient object of $(E,\theta)$, $\mu(Im(f))>0$; on the other side, $Im(f)$ is also a subobject of $(\bar{E},\bar{\theta})$, $\mu(Im(f))\leq0$, contradiction.

Case  2: otherwise,
  let $a$ be the largest integer such that $F^a$ is not contained in $\bar{F}^a$. And $b$ be the largest integer such that $F^{a-i}$ is contained in $F^{b-i}$ for  all $i\geq 0$. Then $b<a$. And we can define a  map $f:(E,\theta)\to (\bar{E},{\theta})$ as follows: for $i>0$, $f|_{E^{a+i}}=0$ , and for $j\leq 0$ $f|_{E^{a-j}}$ is
        $$
          E^{a-j}=F^{a-j}/F^{a-j+1}\to \bar{F}^{b-j}/\bar{F}^{b-j+1}.
       $$
   We see that $f$ is neither  injective  nor a zero map, similarly as in Case 1, we obtain a contradiction.

\end{proof}

\begin{theorem}
Let $(E_0,\theta_0)$ be a semistable Higgs bundle defined over a finite field $k_0$ with nilpotent Higgs field of exponent  $<p$. Assume that   $\text{rk}(E_0)<p$, then $(E_0,\theta_0)$ is  strongly Higgs semistable.
\end{theorem}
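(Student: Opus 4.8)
The plan is to grow a Higgs-de Rham flow out of $(E_0,\theta_0)$ and then to arrange that it be defined over a finite field. Put $(H_0,\nabla_0):=C_0^{-1}(E_0,\theta_0)$, a $\nabla$-semistable flat bundle over $k_0$ by the defining property of $C_0^{-1}$. By Theorem \ref{goodfil} it admits a gr-semistable Griffiths-transverse filtration $F_0^*$, and we put $(E_1,\theta_1):=(\text{Gr}_{F_0}(H_0),\theta)$: a semistable Higgs bundle with $\rk(E_1)=\rk(E_0)<p$ and with $\theta_1$ nilpotent of exponent $<p$ (the weight of $F_0^*$ is at most $\rk(E_1)-1$). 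Hence $C_0^{-1}$ applies again, and iterating yields an infinite Higgs-de Rham flow with leading term $(E_0,\theta_0)$; this is exactly where the hypothesis $\rk(E_0)<p$ is used. The point still to be settled is that, a priori, the $i$-th term is only known to be defined over some finite extension $k_i\supseteq k_0$, and the $k_i$ might not stabilize; so one must choose the filtrations along the way so that a single finite field suffices.

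I would reduce to the case that $(E_0,\theta_0)$ is \emph{stable} by induction on $\rk(E_0)$, the rank-one case being elementary (such a Higgs bundle is periodic, as one sees directly since a degree-zero line bundle over a finite field is torsion in $\operatorname{Pic}^0(X)$, hence corresponds to a finite-order character). If $(E_0,\theta_0)$ is semistable but not stable, choose a stable Higgs subsheaf $(E',\theta')\subset(E_0,\theta_0)$ of the same slope and set $(E'',\theta''):=(E_0,\theta_0)/(E',\theta')$, which is semistable of the same slope with $\rk(E'),\rk(E'')<\rk(E_0)$. By the induction hypothesis $(E',\theta')$ and $(E'',\theta'')$ are strongly Higgs semistable, so it remains to show that an extension of strongly Higgs semistable Higgs bundles (of the same slope, with ranks $<p$ and nilpotent Higgs fields) is again strongly Higgs semistable.

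In the stable case the above construction is canonical as long as the flow stays stable: if a term $(E_i,\theta_i)$ is stable, then Lemma \ref{unique fil} applied to the degree-zero flat bundle $(H_i,\nabla_i)$ shows the gr-semistable Griffiths-transverse filtration $F_i^*$ is unique up to index shift, hence Galois-invariant. So, while stability persists, the entire flow is defined over the fixed finite field generated by $k_0$ and the field over which $C_0^{-1}$ is defined, its terms are semistable Higgs bundles of fixed rank and bounded discriminant, hence fall into finitely many isomorphism classes over that fixed field; the flow is therefore eventually periodic and $(E_0,\theta_0)$ is strongly Higgs semistable. If instead some term $(E_j,\theta_j)$ of this flow is only strictly semistable, then $(E_j,\theta_j)$ is strongly Higgs semistable by the extension argument just described -- its stable Higgs subsheaf and semistable quotient have rank $<\rk(E_j)\le\rk(E_0)$, so the induction hypothesis applies -- and splicing a flow over a finite field out of $(E_j,\theta_j)$ onto the finite initial segment $(E_0,\theta_0)\to\cdots\to(E_j,\theta_j)$ once more exhibits $(E_0,\theta_0)$ as the leading term of a flow over a finite field.

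The main obstacle is the extension step. Given $0\to(E',\theta')\to(E_0,\theta_0)\to(E'',\theta'')\to0$ with the outer terms strongly Higgs semistable, I would use the exactness of $C_0^{-1}$ to pass to an extension $0\to(H',\nabla')\to(H_0,\nabla_0)\to(H'',\nabla'')\to0$ of flat bundles, and then lift the given gr-semistable Griffiths-transverse filtrations of $H'$ and $H''$ to a Griffiths-transverse filtration of $H_0$ whose associated-graded Higgs bundle is the induced extension of the two associated-graded Higgs bundles; an extension of semistable Higgs bundles of the same slope being semistable, the resulting $(E_1,\theta_1)$ is again semistable and is an extension of the next terms of the two flows, so one iterates while staying over a finite field. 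The obstructions to such a compatible lifting of filtrations through an extension do not vanish in general, so the real work is to choose the auxiliary filtrations suitably -- or to argue only up to $S$-equivalence and invoke a strengthening of Lemma \ref{unique fil} to uniqueness up to $S$-equivalence; the remaining verifications are routine.
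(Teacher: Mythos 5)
Your overall strategy coincides with the paper's: iterate $C_0^{-1}$ and the grading functor, using Theorem \ref{goodfil} at each step (the hypothesis $\rk(E_0)<p$ keeping the nilpotency exponent under control), induct on the rank, and in the stable case use the uniqueness of the gr-semistable filtration (Lemma \ref{unique fil}) to get Galois invariance and hence definedness over the base finite field. But there is a genuine gap at exactly the step you yourself flag as the main obstacle, and which you leave unresolved: the claim that an extension of strongly Higgs semistable bundles is again strongly Higgs semistable. Both branches of your argument (the strictly semistable case for $(E_0,\theta_0)$ and the ``splicing'' when a later term of the stable branch becomes strictly semistable) rest on it, so the induction does not close. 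Moreover, the route you sketch for it --- lift the two gr-semistable filtrations to a filtration of the extension whose associated graded is the \emph{induced extension} of the two graded Higgs bundles, then worry about the obstruction, or pass to $S$-equivalence and strengthen Lemma \ref{unique fil} --- is not what is needed and does face real obstructions.

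The paper's resolution avoids any compatibility problem by deliberately killing the extension class at the graded level. Applying $C_0^{-1}$ to the destabilizing sequence gives $0\to (H_1',\nabla')\to (H_1,\nabla_1)\to (H_1'',\nabla_1'')\to 0$; one pulls back the gr-semistable filtration of $H_1''$ under the projection $H_1\to H_1''$ (every term of the pulled-back filtration contains $H_1'$) and then continues it, with an index shift, by the gr-semistable filtration of $H_1'$. This concatenated filtration always exists, is Griffiths transverse, and its associated graded Higgs bundle is the \emph{direct sum} $(E_2'\oplus E_2'',\theta_2'\oplus\theta_2'')$, not an extension; being a direct sum of semistable Higgs bundles of slope $0$ it is semistable, and since $C_0^{-1}$ preserves direct sums, all later terms of the flow are direct sums of terms of the two flows provided by the induction hypothesis, hence everything is defined over the same finite field $k_3$. (Two smaller points: the paper runs this dichotomy on $(E_1,\theta_1)$ rather than on $(E_0,\theta_0)$, which is an inessential difference; and in the stable branch no periodicity is needed --- the definition of strong Higgs semistability only asks that the flow be defined over a finite field, so your boundedness/eventual-periodicity claim is both unnecessary and not fully justified as stated.)
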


\begin{proof} Let $(H_0,\nabla_0):=C_0^{-1}(E_0,\theta_0)$. Note that $(H_0,\nabla_0)$ is $\nabla$-semistable and also defined over $k_0$.  By Theorem \ref{goodfil}, there exists a gr-semistable Griffiths-transverse filtration $F^*$ of $(H_0,\nabla_0)$. Because  $\text{rk}(E_0)<p$,    $(E_1, \theta_1):=(\text{Gr}_F(H_0), \theta)$ satisfies the same assumptions as that on $(E_0,\theta_0)$. We can apply the two functors $C_0^{-1}$ and $\text{Gr}$ alternatively and obtain a Higgs-de Rham flow with $(E_0,\theta_0)$ to be the leading term. 
It suffices to  o show that  such a Higgs-de Rham flow is  defined over a finite field. Argue by induction on $\text{rk}(E_0)$. 
Assume that the statement is  true when $\text{rk}(E_0)<r$ for $r>1$.
Now suppose that  $(E_0,\theta_0)$ is of rank $r$.

Case 1: $(E_1, \theta_1)$ is  not stable. The filtration  $F^*$  is defined over a finite extension $k_1$ of $k_0$, so is $(E_1,\theta_1)$. Since $(E_1,\theta_1)$ is not stable, there exists an extension of Higgs bundles
$$0\to (E_1',\theta_1')\to (E_1,\theta_1)\to (E'',\theta'')\to 0 $$
over a finite extension $k_2$ of $k_1$  with the same slope (=0).\
By induction assumption  we can assume that  both $(E_1',\theta_1')$ and $(E_1, \theta_1'')$ are  leading terms of flows defined over a finite field $k_3\supset k_2$.
Applying inverse Cartier functor over  the extension we obtain an extension of flat bundles over $k_3$.
$$0\to (H_1',\nabla')\to (H_1,\nabla_1)_1\to (H_1'',\nabla_1'')\to 0.$$
Let $(F')^*$ (resp. $(F'')^*$) denote the gr-semistable hodge filtraion  defined over  $k_3$  of $H_1'$ (resp. $H_1''$) in the flows, and the associated graded Higgs bundle is $(E_2',\theta_2')$ (resp. $(E_2'',\theta_2'')$).
One can now make extension of  $(F')^*$ and $(F'')^*$ to obtain
a gr-semistable Hodge filtration on $H_1$ as follows:
pulled back $(F'')^*$ via the projection $H_1\to H''_1$ one obtains
a hodge filtration $(\tilde{F}'')^*$ of $H_1$, and each term of $(\tilde {F}'')^*$ contains $H_1'$. So we can add the $(F')^*$ to $(\tilde{F}'')^*$ and  form a new Hodge filtration  $(F_1)^*$ of $H_1$ defined over $k_3$ .
One checks that it is a gr-semistable Hodge Filtraion on $H_1$, and  the associated graded Higgs bundle $(E_2,\theta_2)$ is a direct sum of $(E_2',\theta_2')$ and $(E_2'',\theta_2'')$.
As invere cartier preserve direct sum, we can produce further terms using the datum of the two flows for  $(E_1',\theta_1')$ and $(E_1'', \theta_1'')$, and all  the terms are defined over $k_3$. 

Case 2: $(E_1, \theta_1)$ is  stable. By lemma(\ref{unique fil}) it has only one gr-semistable Hodge filtration, so the filtration  must be defined over $k_0$. One go on the inverse Cartier-grading process, if in some step it run into case 1, then we are done. If not, then all the Higgs terms are stable, and the filtration are all defined over $k_0$, so the flows is already defined over $k_0$.
\end{proof}
$ $\\
As already mentioned in \cite{Li} , we have the following corollary.
\begin{corollary}
 Let k be the algebraic closure of finite fields of characteristic $p >0$, and $X$ a smooth projective varieties over $k$ which has a $W_2(k)$-lifting.  If $(E_i,\theta_i)$ for $i=1,2$ are semistable Higgs bundles of degree $0$, and  $rank(E_1)+rank(E_2)<p$, then the tensor product $(E_1\otimes E_2, \theta_1\otimes1 +1\otimes \theta_2)$ is also semistable.
\end{corollary}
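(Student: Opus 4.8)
The plan is to deduce the corollary from the preceding theorem by transporting everything through the Higgs--de Rham flow and reducing the semistability of the tensor product to the fact that crystalline representations are closed under tensor product. First note that we may assume each $\theta_i$ is nilpotent of exponent $<p$: in a Higgs--de Rham flow the Higgs field of any graded object $\mathrm{Gr}_F(V,\nabla)$ is nilpotent of exponent at most $\mathrm{rk}(E_i)<p$, and this is precisely the range on which $C_0^{-1}$ is defined, so it is the relevant case. Granting this, the preceding theorem shows each $(E_i,\theta_i)$ is strongly Higgs semistable, hence the leading term of a Higgs--de Rham flow $\{(E_{i,n},\theta_{i,n}),(H_{i,n},\nabla_{i,n}),F_{i,n}\}_{n\ge 0}$ defined over a common finite field $k_0$; all its Higgs terms are semistable of degree $0$ and all its flat terms are $\nabla$-semistable.

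Next I would assemble a tensor flow. If $\theta_1,\theta_2$ are nilpotent of exponents $a\le\mathrm{rk}(E_1)$ and $b\le\mathrm{rk}(E_2)$, then $\theta_1\otimes 1+1\otimes\theta_2$ is nilpotent of exponent at most $a+b-1\le\mathrm{rk}(E_1)+\mathrm{rk}(E_2)-1<p$, so $C_0^{-1}$ is defined on all the tensor products below and, being a tensor functor, satisfies $C_0^{-1}(E_1\otimes E_2,\theta_1\otimes1+1\otimes\theta_2)\cong C_0^{-1}(E_1,\theta_1)\otimes C_0^{-1}(E_2,\theta_2)$, and likewise at every later stage. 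Moreover the convolution $(F_1\otimes F_2)^{k}:=\sum_{i+j=k}F_1^{i}\otimes F_2^{j}$ of two Griffiths--transverse filtrations is again Griffiths--transverse, with associated-graded Higgs bundle $\mathrm{Gr}_{F_1}(\,\cdot\,)\otimes\mathrm{Gr}_{F_2}(\,\cdot\,)$ carrying the tensor Higgs field. Iterating, one obtains a sequence whose $n$-th Higgs term is $(E_{1,n}\otimes E_{2,n},\theta_{1,n}\otimes1+1\otimes\theta_{2,n})$ and whose $n$-th flat term is $H_{1,n}\otimes H_{2,n}$ equipped with the convolution filtration $F_{1,n}\otimes F_{2,n}$; this is a genuine Higgs--de Rham flow over $k_0$ — and then its leading term $(E_1\otimes E_2,\theta_1\otimes1+1\otimes\theta_2)$ is strongly Higgs semistable, hence semistable — as soon as we know every term $(E_{1,n}\otimes E_{2,n},\cdot)$ is semistable.

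To prove that, I would pass to the periodic regime. By \cite{LSZ} a strongly Higgs semistable bundle over a finite field has a pre-periodic flow, so there is an $n_0$ with $(E_{i,n_0},\theta_{i,n_0})$ periodic for $i=1,2$. For $n\ge n_0$ both factors are periodic; under the Lan--Sheng--Zuo correspondence they correspond to crystalline $\pi_1$-representations $\rho_{i,n}$, and since $\mathrm{rk}(E_1)+\mathrm{rk}(E_2)<p$ this correspondence is compatible with tensor products, so $E_{1,n}\otimes E_{2,n}$ corresponds to the crystalline representation $\rho_{1,n}\otimes\rho_{2,n}$ and is therefore periodic, in particular semistable. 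Finally I propagate semistability back from $n_0$ down to $0$: whenever $\mathrm{Gr}_{F_{1,n}\otimes F_{2,n}}(H_{1,n}\otimes H_{2,n})=E_{1,n+1}\otimes E_{2,n+1}$ is semistable, the flat bundle $H_{1,n}\otimes H_{2,n}$ is $\nabla$-semistable, since for every $\nabla$-invariant subbundle $W$ the associated graded along the induced convolution filtration has the same rank and degree as $W$, whence $\mu(W)\le\mu(E_{1,n+1}\otimes E_{2,n+1})=\mu(H_{1,n}\otimes H_{2,n})$; and, $C_0^{-1}$ reflecting semistability, $(E_{1,n}\otimes E_{2,n},\cdot)$ is then semistable. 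Running this from $n=n_0$ to $n=0$ completes the argument.

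The principal obstacle I expect is the tensor-compatibility of the Lan--Sheng--Zuo correspondence (periodic Higgs bundles of rank $<p$ versus crystalline representations) throughout the range $\mathrm{rk}(E_1)+\mathrm{rk}(E_2)<p$, so that ``periodic $\otimes$ periodic is periodic'' genuinely follows from closure of crystalline representations under tensor; together with the input that $C_0^{-1}$ reflects semistability, which is needed to push semistability back down the flow. A further point requiring care is the initial reduction to nilpotent Higgs fields of exponent $<p$ for an arbitrary semistable degree-$0$ Higgs bundle.
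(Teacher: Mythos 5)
The paper itself gives no argument for this corollary---it is stated as following from the main theorem ``as already mentioned in \cite{Li}''---so your proposal has to be measured against the intended deduction: both factors are strongly Higgs semistable by the theorem, the two Higgs--de Rham flows are tensored together (the rank-sum hypothesis keeping the nilpotency exponent below $p$ so that $C_0^{-1}$ stays defined), and semistability of the leading term is then extracted from the machinery of \cite{LSZ}. Your route is this one, and a good part of it is sound: the convolution filtration is Griffiths-transverse with associated graded the tensor product of the gradeds, $C_0^{-1}$ is compatible with tensor products, the exponent of $\theta_1\otimes 1+1\otimes\theta_2$ is at most $\mathrm{rank}(E_1)+\mathrm{rank}(E_2)-1<p$, and your downward-propagation mechanics ($\mathrm{Gr}$ of an induced filtration preserves rank and degree; $C_0^{-1}$ multiplies slopes by $p$ and hence reflects semistability) are correct.

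Two points are genuinely problematic. First, the opening ``we may assume each $\theta_i$ is nilpotent of exponent $<p$'' is not a reduction you are entitled to: nilpotency is a hypothesis, not a property of general semistable degree-$0$ Higgs bundles (on a variety with nonzero symmetric differentials there are semistable degree-$0$ Higgs bundles with non-nilpotent $\theta$), and without it neither the main theorem nor $C_0^{-1}$ applies at the very first step; the corollary must simply be read with that hypothesis, as in \cite{Li} and \cite{LSZ}. Second, the crux---semistability of the tensor terms along the flow---is delegated to tensor-compatibility of the periodic-Higgs/crystalline correspondence, and you never verify that the numerical range in which that compatibility holds (Fontaine--Laffaille-type bounds, phrased via the width of the filtrations, typically $\leq p-2$) is implied by $\mathrm{rank}(E_1)+\mathrm{rank}(E_2)<p$; you flag this yourself as the principal obstacle. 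It is also avoidable, and avoiding it gives a cleaner proof: once the two factor flows are preperiodic (finiteness of isomorphism classes of semistable Higgs bundles of fixed rank and degree $0$ with nilpotent exponent $<p$ over a fixed finite field, re-choosing the later filtrations to repeat an earlier segment), the convolution/tensor flow is preperiodic for purely formal reasons; and the elementary estimate that a destabilizing Higgs subsheaf $S\subset(E,\theta)$ produces the $\nabla$-invariant subsheaf $C_0^{-1}(S)$ of slope $p\,\mu(S)$, whose induced graded is a Higgs subsheaf of the next term of the same slope, gives $\mu_{\mathrm{max}}(E_{n+1})\geq p\,\mu_{\mathrm{max}}(E_n)$ whenever the latter is positive. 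A positive $\mu_{\mathrm{max}}$ at the leading term would therefore grow without bound along a flow with a periodic tail, which is impossible; so the leading term $(E_1\otimes E_2,\theta_1\otimes 1+1\otimes\theta_2)$ is semistable. This single argument replaces both your crystalline step and your downward propagation, and uses only the functoriality and slope behaviour of $C_0^{-1}$ already needed elsewhere in your proof.
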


\end{document}